\documentclass[12pt]{amsart}
\usepackage[all]{xy} % for complicated commutative diagrams
%\usepackage{pb-diagram}
%\dgARROWLENGTH=1.5em

\DeclareFontEncoding{OT2}{}{} % to enable usage of cyrillic fonts

\def\bnpx{B^{\e\prime}_{\e n}(x)}
\def\bnx{B_{\e n}(x)}
\def\iv{{\rm{Div}}\e}
\def\cok{{\rm{Coker}}\,}

\def\img{{\rm{Im}}\,}
\def\sh2{{\s H}^{\e 2}(\bq/\bz(2))}

\def\chn{CH^{n}(X)}

\def\kbix{\overline{K}_{i}(x)}
\def\kblx{\overline{K}_{\be 1}\lbe(x)}
\def\chnb{CH^{n}\be\big(\e\overline{X}\e\big)}
\def\br{{\rm{Br}}\e}
\def\s{\mathcal }
\def\kn{K_{n}}
\def\dn{\partial^{\, n-1}}
\def\dnx{\partial^{\, n-1}_{x}}
\def\pic{{\rm{Pic}}\e}
\def\krn{{\rm{Ker}}\,}

\def\bz{{\mathbb Z}\,}

\def\bq{{\mathbb Q}}

\def\spec{{\rm{Spec}}\,}
\def\pdiv{\text{$p\kern 0.1em$-div}}
\def\vbx{V(\overline{x})}
\def\be{\kern -.1em}
\def\lbe{\kern -.05em}
\def\s{\mathcal }
\def\ra{\rightarrow}
\def\e{\kern 0.08em}
\def\le{\kern 0.04em}
\def\ng{\kern -0.04em}

\def\g{\varGamma}
\def\Bnx{B_{n}(X)}
\def\kb{\overline{k}}

\def\yb{\overline{Y}}

\def\xb{\overline{X}}

\newtheorem{theorem}{Main Theorem\!\!}

\newtheorem{lemma}{Lemma}[section]

\newtheorem{corollary}[lemma]{Corollary}
\newtheorem{proposition}[lemma]{Proposition}
\theoremstyle{definition}
\newtheorem{teorema}[lemma]{Theorem}

\theoremstyle{remark}
\newtheorem{remark}[lemma]{Remark}

\title[The generalized Picard-Brauer exact sequence]{A generalization of the
Picard-Brauer exact sequence}

\subjclass[2000]{Primary 14C15; Secondary 14C25.}

\author{Cristian D. Gonz\'alez-Avil\'es}
\address{Departamento de Matem\'aticas, Universidad de La Serena, La Serena,
Chile}
\email{cgonzalez@userena.cl}

\keywords{Chow groups, Gersten-Quillen complex, $K$-cohomology}

\thanks{The author is partially supported by Fondecyt grant
1080025}

\begin{document}

\begin{abstract} We extend an argument of S.Lichtenbaum involving
codimension one cycles to higher codimensions and obtain a
generalization of the well-known Picard-Brauer exact sequence for a
smooth variety $X$. The resulting exact sequence connects the
codimension $n$ Chow group of $X$ with a certain ``Brauer-like"
group.
\end{abstract}

\maketitle

\section{Introduction.}

Let $k$ be a field and let $X$ be a geometrically integral algebraic
$k$-scheme. We write $\kb$ for a fixed separable algebraic closure
of $k$ and set $\g=\text{Gal}\big(\e\kb/k\big)$. The $\kb$-scheme
$X\times_{\e\spec k}\spec \kb$ will be denoted by $\xb$. Let
$\kb[X]^{*}=H_{\text{\'et}}^{\e 0}(\xb,{\Bbb G}_{m})$ and
$\br^{\prime}X=H_{\text{\'et}}^{\e 2}(X,{\Bbb G}_{m})$ be,
respectively, the group of invertible regular functions on $\xb$ and
the cohomological Brauer group of $X$. The exact sequence mentioned
in the title is the familiar exact sequence

\begin{equation}
\begin{array}{rcl}
0&\ra & H^{\e 1}(k,\kb[X]^{*})\ra\pic X \ra
\big(\e\pic\xb\e\big)^{\lbe\g}\ra H^{\e 2}(k,\kb[X]^{*})
\ra\br_{\be 1}^{\e\prime}X\\
&\ra & H^{\e 1}\big(k,\pic\xb\e\big)\ra H^{\e 3}(k,\kb[X]^{*})
\end{array}
\end{equation}
where $H^{\e i}(k,-)=H^{\e i}(\g,-)$ and
$\br^{\e\prime}_{1}X=\krn\big(\br^{\e\prime}X\ra
\br^{\e\prime}\e\xb\,\big)$. This sequence may be obtained from the
exact sequence of terms of low degree belonging to the
Hochschild-Serre spectral sequence
$$
H^{\e r}\be\big(k,H_{\text{\'et}}^{\e s}\big(\e\xb,{\Bbb
G}_{m}\big)\big)\implies H_{\text{\'et}}^{\e r+s}(X,{\Bbb G}_{m}).
$$
When $X$ is {\it smooth} (which we assume from now on), there exists
an alternative derivation of (1) which makes use of the following
(no less familiar) exact sequence:
\begin{equation}
0\ra\kb[X]^{*}\ra\kb(X)^{*}\ra\text{Div}\e\xb\ra \pic\xb\ra 0
\end{equation}
where $\kb(X)^{*}$ (resp. $\text{Div}\e\xb\e$) is the group of
invertible rational functions (resp. Cartier divisors) on $\xb$.
This approach, seemingly first used by S.Lichtenbaum in [4] and then
reconsidered by Yu.Manin [5, p.403\,], consists in splitting (2)
into two short exact sequences of $\g$-modules and then taking
$\g$-cohomology of these sequences. The resulting long
$\g$-cohomology sequences are then appropriately combined to produce
(1). This paper is a generalization of this idea. The key
observation to make is that (2) may be seen as arising from the
Gersten-Quillen complex corresponding to the Zariski sheaf
${\mathcal K}_{\e 1,\e\xb}$, which is the sheaf on $\xb$ associated
to the presheaf $U\mapsto K_{1}(U)=H^{\e 0}(U,{\mathcal
O}_{U})^{*}$. In Section 2 we work with the Gersten-Quillen complex
corresponding to the Zariski sheaf ${\mathcal K}_{\e n,\e\xb}$
associated to the presheaf $U\mapsto\kn(U)$, where $\kn$ is
Quillen's $n$-th $K$-functor ($1\leq n\leq d=\text{dim}(X)$), and
obtain the following result. Let $\dn\colon\bigoplus_{y\in \xb^{\e
n-1}}\kb(y)^{*}\ra Z^{\e n}\lbe\big(\e\xb\e\big)$ be the ``sum of
divisors" map and let $\Bnx$ be the kernel of the induced map
$$
H^{\e 2}\!\left(k,\textstyle\bigoplus_{y\in \xb^{\e
n-1}}\kb(y)^{*}\right)\ra H^{\e 2}\big(k,Z^{\e
n}\lbe\big(\e\xb\e\big)\big).
$$

\begin{theorem} Let $X$ be a smooth, geometrically integral,
algebraic $k$-scheme. Then there exists a natural exact sequence
$$\begin{array}{rcl}
0 &\ra& H^{\e 1}(k,\krn\dn)\ra\chn\ra\chnb^{\g}\!\ra H^{\e
2}(k,\krn\dn)\\
&\ra&\Bnx\ra H^{\e 1}\big(k,\chnb\big)\ra H^{\e 3}(k,\krn\dn)\,.
\end{array}
$$
\end{theorem}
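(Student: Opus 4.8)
The plan is to transport Lichtenbaum's argument from $(2)$ to the tail of the Gersten--Quillen complex of ${\s K}_{\e n,\e\xb}$. That tail is the exact sequence
\[
0\ra\krn\dn\ra C\xrightarrow{\ \dn\ }Z^{\e n}(\xb)\ra\chnb\ra 0,
\]
where $C=\bigoplus_{y\in\xb^{\e n-1}}\kb(y)^{*}$ and exactness at the last two spots is precisely the definition of rational equivalence, so that $\img\dn$ is the group of cycles rationally equivalent to zero and $\chnb=\cok\dn$. Splitting it into the two short exact sequences of $\g$--modules
\[
0\ra\krn\dn\ra C\xrightarrow{\ \dn\ }\img\dn\ra 0\quad\text{and}\quad 0\ra\img\dn\ra Z^{\e n}(\xb)\ra\chnb\ra 0,
\]
I would take their long exact $\g$--cohomology sequences and splice them together, exactly as in the classical case $n=1$.

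This needs three facts about the two right-hand terms. Since $\kb$ is the \emph{separable} closure of $k$, the fibre of $\xb\ra X$ over any point $x$ is $\spec\big(k(x)\otimes_{k}\kb\big)$, a finite product of fields (the ring being reduced, integral over the field $k(x)$, and with finitely many idempotents) which $\g$ permutes transitively, the stabilizer of a factor being $\text{Gal}\big(\kb/k'\big)$ with $k'$ the separable algebraic closure of $k$ in $k(x)$, and the corresponding factor $k(x)\kb$ being a Galois field extension of $k(x)$ with that group. Hence, over the points of codimension $n-1$ and $n$, $C$ and $Z^{\e n}(\xb)$ decompose into sums of $\g$--modules induced from open subgroups, from which I would extract: (a) $Z^{\e n}(\xb)^{\g}=Z^{\e n}(X)$ and $C^{\,\g}=\bigoplus_{y\in X^{\e n-1}}k(y)^{*}$, compatibly with the divisor maps, so that $\cok\big(\dn^{\g}\big)=\chn$; (b) $H^{\e 1}(k,C)=0$, by Shapiro's lemma, the generalized Hilbert Theorem~90 ($H^{\e 1}$ of an open subgroup of $\g$ with values in the multiplicative group of a Galois overfield vanishes), and the compatibility of continuous cohomology with direct sums; (c) $H^{\e 1}\big(k,Z^{\e n}(\xb)\big)=0$, being a sum of groups $H^{\e 1}(U,\bz)=\text{Hom}_{\text{cont}}(U,\bz)=0$ for open $U\subseteq\g$.

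Granting (a)--(c): from the first short exact sequence (using (b)) one gets $H^{\e 1}(k,\krn\dn)\cong\cok\big(C^{\,\g}\ra(\img\dn)^{\g}\big)$ and an injection $H^{\e 1}(k,\img\dn)\hookrightarrow H^{\e 2}(k,\krn\dn)$; from the second (using (c)) one gets $H^{\e 1}(k,\img\dn)\cong\cok\big(Z^{\e n}(\xb)^{\g}\ra\chnb^{\g}\big)$ and an injection $H^{\e 1}\big(k,\chnb\big)\hookrightarrow H^{\e 2}(k,\img\dn)$. Then $\cok\big(\dn^{\g}\big)=\chn$ together with the inclusions $\img\big(\dn^{\g}\big)\subseteq(\img\dn)^{\g}\subseteq Z^{\e n}(\xb)^{\g}$ lets one chase the two long exact sequences and obtain the asserted sequence: $\chn\ra\chnb^{\g}$ is the natural map; $H^{\e 2}(k,\krn\dn)\ra\Bnx$ is induced by $\krn\dn\hookrightarrow C$, its image lying in $\Bnx=\krn\big(H^{\e 2}(k,C)\ra H^{\e 2}(k,Z^{\e n}(\xb))\big)$ precisely because $\dn$ kills $\krn\dn$; and $\chnb^{\g}\ra H^{\e 2}(k,\krn\dn)$, $H^{\e 1}\big(k,\chnb\big)\ra H^{\e 3}(k,\krn\dn)$ are the composites of the relevant connecting homomorphisms of the two short exact sequences, the needed factorizations through $H^{\e 2}(k,\krn\dn)$ and $H^{\e 3}(k,\krn\dn)$ being the two injections just recorded. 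Naturality is inherited from that of the Gersten--Quillen complex and of the connecting homomorphisms.

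The main obstacle is (a) and (b). Step (a) — that $\g$--invariants send the last two terms of the Gersten complex of $\xb$ to the corresponding terms of $X$, so that $\cok\big(\dn^{\g}\big)$ is genuinely $\chn$ and not merely a sub- or quotient group — rests on the point-set and residue-field behaviour of $\xb\ra X$ over codimension $n-1$ and $n$ points. Step (b), $H^{\e 1}(k,C)=0$, is the analogue one step to the left of the vanishing $H^{\e 1}\big(k,\kb(X)^{*}\big)=0$ that Lichtenbaum uses: the codimension $n-1$ term, being assembled from \emph{function fields} of subvarieties, retains this Hilbert~90 acyclicity, whereas the generic term $K_{n}(\kb(X))$ of the complex does not — and, fortunately, never enters the argument. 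With (a)--(c) in place, the remainder is the routine (if somewhat lengthy) diagram chase indicated above.
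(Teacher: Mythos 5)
Your proposal is correct and follows essentially the same route as the paper: split the tail of the Gersten--Quillen complex into the two short exact sequences $0\ra\krn\dn\ra\bigoplus_{x}\kblx\ra\img\dn\ra 0$ and $0\ra\img\dn\ra Z^{\e n}(\xb)\ra\chnb\ra 0$, kill $H^{\e 1}$ of the middle terms via Shapiro plus Hilbert 90 and the permutation-module structure of $Z^{\e n}(\xb)$, and splice the two long exact sequences. Your items (a)--(c) are exactly the inputs the paper uses (it delegates (a) to the argument of Colliot-Th\'el\`ene--Raskind, Prop.\ 3.6, which you instead spell out), and the remaining diagram chase, including the construction of $\Bnx\ra H^{\e 1}(k,\chnb)$, is the same splicing the paper carries out.
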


\medskip

The case $n=1$ of the theorem is precisely the exact sequence (1).

\medskip

In Section 4, which concludes the paper, we show that the group
$\Bnx$ in the exact sequence of the theorem is ``Brauer-like", in
the sense that it contains a copy of $\br_{1} Y=\krn\be\big[\br
Y\ra\br\yb\,\big]$ for every smooth closed integral subscheme
$Y\subset X$ of codimension $n-1$.

\section{Preliminaries}

We keep the notations of the Introduction. In particular, $X$ is a
smooth, geometrically integral algebraic $k$-scheme of dimension $d$
and $n$ denotes a fixed integer such that $1\leq n\leq d$.

There exists a natural bijection between the set of schematic points
of $X$ and the set of closed integral subschemes of $X$. This is
defined by associating to a point $x\in X$ the schematic closure
$V(x)$ of $x$ in $X$. The codimension (resp. dimension) of $x$ is by
definition the codimension (resp. dimension) of $V(x)$. The set of
points of $X$ of codimension (resp. dimension) $i$ will be denoted
by $X^{i}$ (resp. $X_{i}$), and $\eta$ (resp. $\overline{\eta}$)
will denote the generic point of $X$ (resp. $\xb$). If $x\neq\eta$,
the function field of $V(x)$ will be denoted by $k(x)$. We use the
standard notation $k(X)$ for the function field of $X=V\be(\eta)$.
For each $x\in X$, $i_{x}$ will denote the canonical map
$\text{Spec}\e\e k(x)\ra X$. The function field of $\xb$ will be
denoted by $\kb(X)$. For simplicity, we will write $\vbx$ for
$V(x)\times_{\spec k}\spec \kb$.

Since $\xb$ is regular [3, 6.7.4], the sheaf ${\s K}_{\e n,\e\xb}$
admits the following flasque resolution, known as the
Gersten-Quillen resolution (see [7, p.72]):

$$\begin{array}{rcl}
0\ra {\s K}_{\e n,\e\xb} &\ra(\e i_{\e\overline{\eta}}\e)_{*}\e
K_{n}\kb(X) \ra\displaystyle\bigoplus_{y\in\xb^{\e 1}}(\e i_{\e
y})_{*}K_{n-1}\e\kb(y)\ra
\dots\\
&\ra\!\!\displaystyle\bigoplus_{y\in\xb^{\e n-1}}(\e i_{\e
y})_{*}\e\kb(y)^{*} \ra \displaystyle\bigoplus_{y\in\xb^{\e n}}(\e
i_{\e y})_{*}\e\bz\e\ra 0
\end{array}
$$
where, for $y\in\xb^{\, i}$, $K_{n-i}\,\kb(y)$ is regarded as a
constant sheaf on $\kb(y)$. It follows that the groups $H^{\e
i}\lbe\big(\e\xb,{\s K}_{\e n,\xb}\e\big)=H^{\e i}\lbe\big(\e\xb,{\s
K}_{\e n}\e\big)$ are the cohomology groups of the complex
\begin{equation}
\kn\kb(X)\overset{\partial^{\e
0}}\longrightarrow\displaystyle\bigoplus_{y\in\xb^{\e 1}}\be
K_{n-1}\e\kb(y)\overset{\partial^{\e 1}}\longrightarrow\dots
\overset{\partial^{\e n\be-\be
2}}\longrightarrow\displaystyle\bigoplus_{y\in\xb^{\e
n-1}}\ng\be\kb(y)^{*}\overset{\partial^{\e n\be-\be
1}}\longrightarrow\displaystyle\bigoplus_{y\in\xb^{\e n}}\be\Bbb
Z\,.
\end{equation}
Now, if $q\colon\xb\ra X$ is the canonical morphism and $x\in X$, we
write $\xb^{\, n-i}_{\be x}$ for the set of points $y\in \xb^{\e
n-i}$ such that $q(y)=x$. For $i=1,2,\dots, n\be -\be 1$ and $\,
x\in X^{n-i}$, set
$$
\kbix=\bigoplus_{y\e\in\e\xb^{\e n-i}_{\be x}} K_{i}\e\kb(y).
$$
Further, write $Z^{\e n}\be\big(\e\xb\e\big)$ for the group of
codimension $n$ cycles on $\xb$, i.e.,
$$
Z^{\e n}\be\big(\e\xb\e\big)=\displaystyle\bigoplus_{y\in\xb^{\e
n}}\Bbb Z\,.
$$
Then (3) may be written as
\begin{equation}
\kn\kb(X)\overset{\partial^{\e 0}}\longrightarrow\bigoplus_{x\in
X^{1}}\be \overline{K}_{n-1}(x)\overset{\partial^{\e
1}}\longrightarrow\dots \overset{\partial^{\e n\be-\be
2}}\longrightarrow\ng\ng\bigoplus_{x\in X^{n-1}}\ng\be
\kblx\overset{\partial^{\e n\be-\be 1}}\longrightarrow Z^{\e
n}\be\big(\e\xb\e\big)\,.
\end{equation}
The differential $\dn$ equals $\sum_{\e x\in X^{\lbe n-1}}\dnx\,$,
where, for each $x\in X^{n-1}$,
$$
\dnx\colon\kblx=\displaystyle\bigoplus_{y\in\xb^{\e n-1}_{\be x}}
\kb(y)^{*} \ra Z^{\e n}\be\big(\e\xb\e\big)
$$
is the sum of the divisor maps
$$
\text{div}_{\be y}\colon\kb(y)^{*}\ra Z^{\e n}\be\big(\e\xb\e\big).
$$
For definition of the latter, see [7, p.72]. We note that each of
the maps $\text{div}_{ y}$ factors through $Z^{\e 1}(\e V(y))$,
whence each $\dnx$ factors through $Z^{\e 1}(\e\vbx)$.

We will write  $CH^{\e n}\lbe(X)$ for the Chow group of codimension
$n$ cycles on $X$ modulo rational equivalence. Then $H^{\e n}(X,\s
K_{n})=CH^{\e n}\lbe(X)$ (``Bloch's formula").

\section{Proof of the main theorem}

The complex (4) induces the following short exact sequences of
$\g$-modules:
\begin{equation}
 0\ra \img\dn\ra Z^{\e
n}\be\big(\e\xb\e\big)\ra\chnb\ra 0
\end{equation}
and
\begin{equation} 0\ra\krn\dn\ra\displaystyle\bigoplus_{x\in
X^{n-1}} \kblx\ra\img\dn\ra  0.
\end{equation}
Observe that the natural morphism $q\colon\xb\ra X$ induces a
homomorphism $\chn\ra\chnb^{\g}$.

\begin{lemma} There exist canonical isomorphisms
$$
\begin{array}{rcl}
\krn\!\be\left[\chn\ra\chnb^{\g}\e\right]&=& H^{\e
1}\be\big(k,\krn\dn\e\big)\\\\
\cok\!\be\left[\chn\ra\chnb^{\g}\e\right]&=& H^{\e
1}\be\big(k,\img\dn\e\big)
\end{array}
$$
and a canonical exact sequence
$$
0\ra H^{\e 1}\be\big(k,\chnb\big)\ra H^{\e
2}\be\big(k,\img\dn\e\big)\ra H^{\e 2}\big(k,Z^{\e n}
\be\big(\e\xb\e\big)\big).
$$
\end{lemma}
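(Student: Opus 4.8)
The plan is to extract everything from the long exact $\g$-cohomology sequence associated to the short exact sequence of $\g$-modules~(5). Since $X$ is geometrically integral, $H^{\e 0}\big(k,Z^{\e n}(\xb)\big)$ surjects onto $H^{\e 0}\big(k,\chnb\big)=\chnb^{\g}$ only after we correct by the contribution of $\img\dn$; the point is that the composite $Z^{\e n}(X)\ra Z^{\e n}(\xb)^{\g}\ra\chnb^{\g}$ has the same image and kernel as the map $\chn\ra\chnb^{\g}$ we actually care about. So the first step is to identify $\chn$, $\img\dn$, $Z^{\e n}(X)$ and so on with the appropriate invariants: using~(4) and Bloch's formula, $\chn=\cok\!\big[\bigoplus_{x\in X^{n-1}}\kblx\to Z^{\e n}(X)\big]$, and comparing with the $\g$-invariants of~(5) and~(6) one gets $\chn=\cok\!\big[H^{\e 0}(k,\bigoplus_{x}\kblx)\to Z^{\e n}(X)\big]$, while $Z^{\e n}(\xb)^{\g}/\img\dn^{\,\g}=\chnb^{\g}$ only up to the $H^{\e 1}$ obstruction.

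The second step is the diagram chase itself. Take $\g$-cohomology of~(5):
\begin{equation*}
0\ra (\img\dn)^{\g}\ra Z^{\e n}(\xb)^{\g}\ra\chnb^{\g}\ra H^{\e 1}(k,\img\dn)\ra H^{\e 1}\big(k,Z^{\e n}(\xb)\big)\ra\cdots
\end{equation*}
Because $Z^{\e n}(\xb)=\bigoplus_{y\in\xb^{\e n}}\bz$ is a permutation $\g$-module (induced from the finite $\g$-sets $\xb^{\e n}_{x}$, which are $\g$-orbits mapping to closed points $x\in X^{n}$), Shapiro's lemma gives $H^{\e i}\big(k,Z^{\e n}(\xb)\big)=\bigoplus_{x\in X^{n}}H^{\e i}\big(k(x)\be/\be k,\bz\big)$; in particular $H^{\e 1}\big(k,Z^{\e n}(\xb)\big)=0$ since $H^{\e 1}(\g',\bz)=\mathrm{Hom}(\g',\bz)=0$ for any profinite group $\g'$. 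Hence $\chnb^{\g}\ra H^{\e 1}(k,\img\dn)$ is surjective, and combined with the identification $Z^{\e n}(X)/\img\dn\!\big|_{X}\hookrightarrow Z^{\e n}(\xb)^{\g}/(\img\dn)^{\g}$ this yields the stated formulas for the kernel and cokernel of $\chn\ra\chnb^{\g}$, the latter being exactly the image of $\chnb^{\g}$ in $H^{\e 1}(k,\img\dn)$, i.e. all of it. For the final three-term exact sequence, continue the same long exact sequence one step: $H^{\e 1}\big(k,Z^{\e n}(\xb)\big)=0$ forces $0\ra H^{\e 1}\big(k,\chnb\big)\ra H^{\e 2}(k,\img\dn)\ra H^{\e 2}\big(k,Z^{\e n}(\xb)\big)$ to be exact, which is precisely the asserted sequence.

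I expect the main obstacle to be the bookkeeping in the first step: one must be careful that $\krn\dn$, $\img\dn$ and the Chow groups are being compared \emph{compatibly} over $k$ and over $\kb$, i.e. that the natural map $\chn\to\chnb^{\g}$ really does fit into the $\g$-cohomology sequence of~(5) with the stated kernel and cokernel. The cleanest way is to note that $Z^{\e n}(X)=Z^{\e n}(\xb)^{\g}$ (a codimension-$n$ subvariety of $X$ is the same as a $\g$-stable one over $\xb$, again by the orbit description of $\xb^{\e n}$) and that $\bigoplus_{x\in X^{n-1}}\kblx=H^{\e 0}\big(k,\bigoplus_{x\in X^{n-1}}\kblx\big)$ essentially by construction of $\kblx$ as an induced module; granting this, $\chn\to\chnb^{\g}$ is literally the map on cokernels induced by the identity on $Z^{\e n}$-terms, and the two formulas follow from the snake lemma applied to the map between the four-term presentation of $\chn$ (taken verbatim, as $\g$-invariants are left exact) and the $\g$-invariants of~(5)--(6). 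The vanishing $H^{\e 1}\big(k,Z^{\e n}(\xb)\big)=0$ and the mild structural fact about permutation modules are the only inputs beyond pure diagram chasing, and both are standard.
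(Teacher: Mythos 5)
Your overall strategy --- take the long exact $\g$-cohomology sequence of (5), use that $Z^{\e n}\be\big(\e\xb\e\big)$ is a permutation $\g$-module so that $H^{\e 1}\big(k,Z^{\e n}\be\big(\e\xb\e\big)\big)=0$, and read off the cokernel formula and the final three-term sequence --- is exactly the paper's, and those two parts of the lemma are correctly handled. The gap is in the first isomorphism. A first, probably notational, problem: the identification $\bigoplus_{x\in X^{n-1}}\kblx=H^{\e 0}\big(k,\bigoplus_{x}\kblx\big)$ is false as stated, since $\g$ acts nontrivially on $\kblx=\bigoplus_{y\in\xb^{\e n-1}_{x}}\kb(y)^{*}$ (it permutes the $y$'s and acts semilinearly on each $\kb(y)^{*}$); the invariants are $k(x)^{*}$, which is the degree $n-1$ term of the Gersten complex over $X$. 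Presumably that is what you meant, but it matters for the next point.

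More seriously, your snake-lemma chase only proves
$\krn\be\left[\chn\ra\chnb^{\g}\right]\cong\cok\be\left[\bigoplus_{x}k(x)^{*}\ra(\img\dn)^{\g}\right]$,
and the long exact sequence of (6) identifies this cokernel with the \emph{kernel} of $H^{\e 1}(k,\krn\dn)\ra H^{\e 1}\big(k,\bigoplus_{x}\kblx\big)$, i.e.\ a priori only a subgroup of $H^{\e 1}(k,\krn\dn)$. To get the whole group you must show $H^{\e 1}(k,\kblx)=0$ for each $x\in X^{n-1}$; this follows from Shapiro's lemma, which reduces it to $H^{\e 1}\big(\mathrm{Gal}(\kb(y)/k(x)),\kb(y)^{*}\big)$, together with Hilbert's Theorem 90 for the extension $\kb(y)/k(x)$. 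This is not a ``permutation module'' fact --- the coefficients here are multiplicative groups of fields, not free abelian groups on a $\g$-set --- and it is precisely the input the paper supplies via the reference to [1, proof of Proposition 3.6] (and uses again in Lemma 3.2). Your closing claim that the only inputs beyond diagram chasing are $H^{\e 1}\big(k,Z^{\e n}\be\big(\e\xb\e\big)\big)=0$ and generalities about permutation modules is therefore not correct; without Hilbert 90 your argument yields only an injection $\krn\be\left[\chn\ra\chnb^{\g}\right]\hookrightarrow H^{\e 1}(k,\krn\dn)$.
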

\begin{proof} This follows by taking $\g$-cohomology of (5), using the fact
that $Z^{\e n}(\xb)$ is a permutation $\g$-module and arguing as in
[1, proof of Proposition 3.6] to establish the first isomorphism.
\end{proof}

\begin{lemma} The exact sequence (6) induces an exact sequence
$$
\begin{array}{rcl}
 0&\ra & H^{\e 1}\be\big(k,\img\dn\big)
\ra H^{\e 2}\be\big(k,\krn\dn\big)\ra
\displaystyle\bigoplus_{x\in X^{\lbe n-1}} H^{\e 2}\big(k,\kblx\big)\\
&\ra & H^{\e 2}\be\big(k,\img\dn\big)\ra H^{\e
3}\be\big(k,\krn\dn\big)\,.
\end{array}
$$
\end{lemma}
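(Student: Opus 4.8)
The proof proceeds by taking the long exact $\g$-cohomology sequence of the short exact sequence (6) and identifying the relevant terms. The main point is to show that the two ``obvious'' pieces $H^{\e 0}\big(k,\bigoplus_{x}\kblx\big)$ and $H^{\e 1}\big(k,\bigoplus_{x}\kblx\big)$ either vanish or cancel against neighbouring terms, so that what survives is exactly the five-term sequence in the statement, shifted so as to begin with $H^{\e 1}(k,\img\dn)$.

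\medskip

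First I would write out the portion of the long exact sequence attached to (6) running from $H^{\e 0}$ through $H^{\e 3}$:
$$
\begin{array}{l}
0\ra H^{\e 0}(k,\krn\dn)\ra H^{\e 0}\be\big(k,\textstyle\bigoplus_{x}\kblx\big)\ra H^{\e 0}(k,\img\dn)\ra H^{\e 1}(k,\krn\dn)\\[1mm]
\ra H^{\e 1}\be\big(k,\textstyle\bigoplus_{x}\kblx\big)\ra H^{\e 1}(k,\img\dn)\ra H^{\e 2}(k,\krn\dn)\ra H^{\e 2}\be\big(k,\textstyle\bigoplus_{x}\kblx\big)\\[1mm]
\ra H^{\e 2}(k,\img\dn)\ra H^{\e 3}(k,\krn\dn)\ra\cdots.
\end{array}
$$
To obtain the asserted sequence it suffices to know that $H^{\e 1}\big(k,\bigoplus_{x\in X^{n-1}}\kblx\big)=0$; granting this, the arrow $H^{\e 1}(k,\img\dn)\ra H^{\e 2}(k,\krn\dn)$ becomes injective and the five-term segment beginning at $H^{\e 1}(k,\img\dn)$ is exact, which is precisely the claim. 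The natural source of this vanishing is the structure of each module $\kblx=\bigoplus_{y\in\xb^{\e n-1}_{x}}\kb(y)^{*}$: for $x\in X^{n-1}$ the closure $V(x)$ is an integral $k$-scheme of dimension $d-n+1$, its generic point has residue field $k(x)$, and $\kb(y)^{*}$ for $y$ lying over $x$ is (as a $\g$-module) an induced module $\mathrm{Ind}^{\g}_{\g_{k(x)}}\kb(x)^{*}$ obtained by inducing up the Galois module of units of the separable closure of $k(x)$. One then invokes Shapiro's lemma to reduce to $H^{\e 1}$ of that absolute Galois group with coefficients in the multiplicative group of a separably closed field, which is trivial by the additive form of Hilbert 90 (Hilbert's Theorem 90 gives $H^{\e 1}(\g_{F},\overline{F}^{*})=0$ for any field $F$). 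So $H^{\e 1}\big(k,\kblx\big)=0$ for each $x$, and since $\g$-cohomology commutes with the direct sum here (the sum is of induced modules indexed by a Galois-stable set of points, and in each cohomological degree one may interchange the sum with cohomology), we get $H^{\e 1}\big(k,\bigoplus_{x}\kblx\big)=0$.

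\medskip

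The one delicate point is the commutation of $\g$-cohomology with the infinite direct sum $\bigoplus_{x\in X^{n-1}}\kblx$. This is not automatic for arbitrary profinite group cohomology, but here it is safe: the index set $X^{n-1}$ is the disjoint union of the finite $\g$-orbits through which $\xb^{\e n-1}$ decomposes over $X^{n-1}$, and on each such finite orbit the contribution is an induced module from an open subgroup, for which Shapiro's lemma applies termwise; continuous cohomology of a profinite group commutes with filtered colimits, so the full direct sum is handled by passing to the colimit over finite sub-sums. Alternatively, one can simply observe that $\bigoplus_{y\in\xb^{\e n-1}}(\e i_{\e y})_{*}\kb(y)^{*}$ is already the direct image of a flasque sheaf from a disjoint union of spectra of fields, so its $\g$-cohomology is computed in the same way. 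I expect this bookkeeping with induced modules and the direct sum to be the main (though routine) obstacle; everything else is the formal long exact sequence of (6).
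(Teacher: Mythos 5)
Your argument is essentially the paper's: take the long exact $\g$-cohomology sequence of (6) and kill $H^{\e 1}\big(k,\bigoplus_{x}\kblx\big)$ by Shapiro's lemma plus Hilbert's Theorem 90, so that the five-term segment starting at $H^{\e 1}(k,\img\dn)$ is exact. Two cosmetic slips only: it is the multiplicative, not the ``additive,'' form of Hilbert 90 that you use, and the module $\bigoplus_{y/x}\kb(y)^{*}$ is induced from the stabilizer of a chosen $y$ acting on $\kb(y)^{*}$ with $\kb(y)$ the compositum $k(x)\cdot\kb$ (so one applies $H^{\e 1}\big(\mathrm{Gal}(\kb(y)/k(x)),\kb(y)^{*}\big)=0$), not from units of a separable closure of $k(x)$.
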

\begin{proof} By Shapiro's Lemma, for each $x\in X^{n-1}$ there exists
a (non-canonical) isomorphism
$$
H^{*}(k,\kblx)\simeq
H^{*}\be\big(\e\text{Gal}\big(\e\kb(y)/k(x)\big),\kb(y)^{*}\big)
$$
where, on the right, we have chosen a point $y\in\xb^{\, n-1}$ such
that $q(y)=x$. The result now follows by taking $\g$-cohomology of
(6), using Hilbert's Theorem 90.
\end{proof}

Combining Lemmas 3.1 and 3.2, we obtain

\begin{proposition} There exists a canonical exact sequence
$$\begin{array}{rcl}
0&\ra & H^{\e 1}\be\big(k,\krn\dn\e\big)\ra\chn\ra\chnb^{\g}\\
&\ra & H^{\e
2}\be\big(k,\krn\dn\e\big)\ra\displaystyle\bigoplus_{x\in X^{n-1}}
H^{\e 2}(k,\kblx).\qed
\end{array}
$$
\end{proposition}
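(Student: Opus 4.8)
The plan is to obtain the asserted sequence by splicing the outputs of Lemmas 3.1 and 3.2 along the common group $H^{\e 1}(k,\img\dn)$, with no new geometric input. First I would repackage Lemma 3.1: for any homomorphism of abelian groups $A\ra B$ one has the tautological exact sequence $0\ra\krn[A\ra B]\ra A\ra B\ra\cok[A\ra B]\ra 0$, and applying this to $\chn\ra\chnb^{\g}$ and substituting the two isomorphisms of Lemma 3.1 yields an exact sequence
$$
0\ra H^{\e 1}(k,\krn\dn)\ra\chn\ra\chnb^{\g}\ra H^{\e 1}(k,\img\dn)\ra 0,
$$
in which $\chnb^{\g}\ra H^{\e 1}(k,\img\dn)$ is the canonical surjection onto $\cok[\chn\ra\chnb^{\g}]$ composed with the Lemma 3.1 identification of that cokernel with $H^{\e 1}(k,\img\dn)$.

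Next, Lemma 3.2 furnishes the exact sequence
$$
0\ra H^{\e 1}(k,\img\dn)\ra H^{\e 2}(k,\krn\dn)\ra\bigoplus_{x\in X^{n-1}}H^{\e 2}(k,\kblx),
$$
whose first nontrivial map is the connecting homomorphism of (6) and whose second map is induced by the inclusion $\krn\dn\hookrightarrow\bigoplus_{x\in X^{n-1}}\kblx$; the group $H^{\e 1}(k,\img\dn)$ here is literally the one from the previous step, both being obtained by taking $\g$-cohomology of (5) and (6). I would then declare the map $\chnb^{\g}\ra H^{\e 2}(k,\krn\dn)$ of the asserted sequence to be the composite $\chnb^{\g}\ra H^{\e 1}(k,\img\dn)\hookrightarrow H^{\e 2}(k,\krn\dn)$ — a surjection followed by an injection — and continue with $H^{\e 2}(k,\krn\dn)\ra\bigoplus_{x}H^{\e 2}(k,\kblx)$.

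It then suffices to verify exactness at $\chn$, at $\chnb^{\g}$, and at $H^{\e 2}(k,\krn\dn)$. Exactness at $H^{\e 1}(k,\krn\dn)$ and at $\chn$ is the first isomorphism of Lemma 3.1. At $\chnb^{\g}$: the map $\chnb^{\g}\ra H^{\e 2}(k,\krn\dn)$ factors through the injection $H^{\e 1}(k,\img\dn)\hookrightarrow H^{\e 2}(k,\krn\dn)$, so its kernel coincides with the kernel of $\chnb^{\g}\ra H^{\e 1}(k,\img\dn)$, namely the image of $\chn\ra\chnb^{\g}$. At $H^{\e 2}(k,\krn\dn)$: since $\chnb^{\g}\ra H^{\e 1}(k,\img\dn)$ is onto, the image of $\chnb^{\g}\ra H^{\e 2}(k,\krn\dn)$ equals the image of $H^{\e 1}(k,\img\dn)\hookrightarrow H^{\e 2}(k,\krn\dn)$, which by Lemma 3.2 is the kernel of $H^{\e 2}(k,\krn\dn)\ra\bigoplus_{x}H^{\e 2}(k,\kblx)$; all arrows being canonical, so is the assembled sequence. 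I do not expect a genuine obstacle: this is pure diagram chasing once Lemmas 3.1 and 3.2 are available, and the only point demanding care is the bookkeeping — that the single group $H^{\e 1}(k,\img\dn)$ is genuinely shared by the two sequences, and that every map used is the canonical one. (Extending the sequence further to accommodate $\Bnx$ and $H^{\e 1}(k,\chnb)$, and so recover the full Main Theorem, is a separate matter not addressed here.)
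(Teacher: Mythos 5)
Your proposal is correct and is precisely what the paper does: Proposition 3.3 is obtained by splicing the four-term sequence coming from the kernel/cokernel identifications of Lemma 3.1 with the sequence of Lemma 3.2 along the shared group $H^{\e 1}(k,\img\dn)$, the paper simply writing ``Combining Lemmas 3.1 and 3.2'' where you have spelled out the (routine) exactness checks. Nothing further is needed.
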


Now define
\begin{equation}
\Bnx=\krn\!\be\left[\e H^{\e 2}\!\left(k,\textstyle\bigoplus_{x\in
X^{n-1}}\kblx\right)\ra H^{\e 2}\big(k,Z^{\e
n}\be\big(\e\xb\e\big)\big)\right],
\end{equation}
where the map involved is induced by $\dn$. Since the composite
$$
\krn\dn\ra\bigoplus_{x\in X^{n\be -\be 1}}
\kblx\overset{\dn}\longrightarrow Z^{\e n}\be\big(\e\xb\e\big)
$$
is zero, the natural map $H^{\e 2}(k,\krn\dn)\ra\be\bigoplus_{x\in
X^{n-1}}\be H^{\e 2}(k,\kblx)$ factors through $\Bnx$. Thus
Proposition 3.3 yields a natural exact sequence
\begin{equation}
\begin{array}{rcl}
0&\ra & H^{\e 1}\be\big(k,\krn\dn\e\big)\ra\chn\ra\chnb^{\g}\\
&\ra & H^{\e 2}\be\big(k,\krn\dn\e\big)\ra\Bnx.
\end{array}
\end{equation}
We will now extend the above exact sequence by defining a map
$\Bnx\ra H^{\e 1}(k,\chnb)$ whose kernel is exactly the image of the
map $H^{\e 2}(k,\krn\dn)\ra\Bnx$ appearing in (8).

It is not difficult to check that the map
$$
\displaystyle\bigoplus_{x\in X^{n-1}} H^{\e 2}\big(k,\kblx\big)\ra
H^{\e 2}\be\big(k,\img\dn\big)
$$
intervening in the exact sequence of Lemma 3.2 maps $\Bnx$ into the
kernel of the map $H^{\e 2}(k,\img\dn)\ra H^{\e 2}\be\big(k,Z^{\e n}
\be\big(\e\xb\e\big)\big)$. The latter is naturally isomorphic to
$H^{\e 1}\be\big(k,\chnb\big)$ (see Lemma 3.1). Thus there exists a
canonical map $\Bnx\ra H^{\e 1}\be\big(k,\chnb\big)$. Again, it is
not difficult to check that the kernel of the map just defined is
exactly the image of the map $H^{\e 2}(k,\krn\dn)\ra\Bnx$ appearing
in (8). Thus we obtain a natural exact sequence
$$
\begin{array}{rcl}
0 &\ra& H^{\e 1}(k,\krn\dn)\ra\chn\ra\chnb^{\g}\ra H^{\e 2}
(k,\krn\dn)\\
&\ra &\Bnx\ra H^{\e 1}(k,\chnb)\,.
\end{array}
$$
Finally, the homomorphisms $H^{\e 1}\be(k,\chnb)\be\ra\be H^{\e
2}\be(k,\img\dn)$ and $H^{\e 2}\be(k,\img\dn)\be\ra\be H^{\e
3}\be(k,\krn\dn)$ from Lemmas 3.1 and 3.2 induce a map $H^{\e
1}(k,\chnb)\ra H^{\e 3}(k,\krn\dn)$ whose kernel is exactly the
image of the map $\Bnx\ra H^{\e 1}(k,\chnb)$ defined above. Thus the
following holds.

\begin{teorema} Let $X$ be a smooth $k$-variety. Then there exists
a natural exact sequence
$$\begin{array}{rcl}
0 &\ra & H^{\e 1}(k,\krn\dn)\ra\chn\ra\chnb^{\g}\ra
H^{\e 2}(k,\krn\dn)\\
&\ra &\Bnx\ra H^{\e 1}\be\big(k,\chnb\big)\ra H^{\e 3}(k,\krn\dn),
\end{array}
$$
where $\Bnx$ is the group (7).
\end{teorema}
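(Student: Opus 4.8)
The plan is to splice together the long exact Galois-cohomology sequences attached to the two short exact sequences (5) and (6) of $\g$-modules, thereby extending to arbitrary codimension the classical argument applied to (2) in the case $n=1$. First I would take $\g$-cohomology of (5): since $Z^{\e n}(\xb)$ is a permutation $\g$-module one has $H^{\e 1}(k,Z^{\e n}(\xb))=0$, which yields the isomorphism $\cok\left[\chn\to\chnb^{\g}\right]\cong H^{\e 1}(k,\img\dn)$ together with the left-exact sequence $0\to H^{\e 1}(k,\chnb)\to H^{\e 2}(k,\img\dn)\to H^{\e 2}(k,Z^{\e n}(\xb))$ of Lemma 3.1. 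For the remaining identification of that lemma I would observe that the $\g$-invariants of the last two terms of (3) recover the corresponding portion of the Gersten-Quillen complex over $X$ (since $\kblx^{\g}=k(x)^*$ and $Z^{\e n}(\xb)^{\g}=Z^{\e n}(X)$), so that $\chn$ is the cokernel of the last differential over $X$; hence $\krn\left[\chn\to\chnb^{\g}\right]$ is the quotient of $(\img\dn)^{\g}$ by that image, and the long exact sequence of (6) — together with $H^{\e 1}(k,\bigoplus_{x}\kblx)=0$ (Shapiro plus Hilbert~90) — identifies this quotient with $H^{\e 1}(k,\krn\dn)$.

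Next I would take $\g$-cohomology of (6), using Shapiro's lemma to rewrite $H^*(k,\kblx)$ as cohomology of the decomposition group of a point above $x$ and Hilbert's Theorem~90 to kill the corresponding $H^{\e 1}$; this gives the exact sequence of Lemma 3.2, in particular the identification $H^{\e 1}(k,\img\dn)=\krn\left[H^{\e 2}(k,\krn\dn)\to\bigoplus_{x}H^{\e 2}(k,\kblx)\right]$ and the connecting map $H^{\e 2}(k,\img\dn)\to H^{\e 3}(k,\krn\dn)$. Splicing Lemmas 3.1 and 3.2 at the common group $H^{\e 1}(k,\img\dn)$ produces Proposition 3.3. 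Since the composite $\krn\dn\to\bigoplus_{x}\kblx\overset{\dn}\to Z^{\e n}(\xb)$ is zero, the last map of Proposition 3.3 factors through $\Bnx$, giving the sequence (9). To continue I would use that the map $\bigoplus_{x}H^{\e 2}(k,\kblx)\to H^{\e 2}(k,\img\dn)$ of Lemma 3.2 carries $\Bnx$ into $\krn\left[H^{\e 2}(k,\img\dn)\to H^{\e 2}(k,Z^{\e n}(\xb))\right]$, which Lemma 3.1 identifies with $H^{\e 1}(k,\chnb)$, producing a map $\Bnx\to H^{\e 1}(k,\chnb)$; composing the inclusion $H^{\e 1}(k,\chnb)\hookrightarrow H^{\e 2}(k,\img\dn)$ with the connecting map of (6) then produces $H^{\e 1}(k,\chnb)\to H^{\e 3}(k,\krn\dn)$. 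Concatenating all of this gives the asserted exact sequence with $\Bnx$ as in (7).

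The only genuine work is to verify exactness at the two new nodes $\Bnx$ and $H^{\e 1}(k,\chnb)$; this is the step I expect to be the main obstacle, though it is entirely formal. At $\Bnx$: the kernel of $\Bnx\to H^{\e 1}(k,\chnb)$ equals $\Bnx\cap\krn\left[H^{\e 2}(k,\bigoplus_{x}\kblx)\to H^{\e 2}(k,\img\dn)\right]$, which by exactness of the sequence of Lemma 3.2 at $H^{\e 2}(k,\bigoplus_{x}\kblx)$ is $\img\left[H^{\e 2}(k,\krn\dn)\to H^{\e 2}(k,\bigoplus_{x}\kblx)\right]$; as this image already lies in $\Bnx$, it coincides with the image of $H^{\e 2}(k,\krn\dn)\to\Bnx$. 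At $H^{\e 1}(k,\chnb)$: viewed inside $H^{\e 2}(k,\img\dn)$, the kernel of $H^{\e 1}(k,\chnb)\to H^{\e 3}(k,\krn\dn)$ is the intersection of $\krn\left[H^{\e 2}(k,\img\dn)\to H^{\e 2}(k,Z^{\e n}(\xb))\right]$ with $\img\left[H^{\e 2}(k,\bigoplus_{x}\kblx)\to H^{\e 2}(k,\img\dn)\right]$ (exactness of Lemma 3.2 at $H^{\e 2}(k,\img\dn)$); any such element is the image of some $\gamma\in H^{\e 2}(k,\bigoplus_{x}\kblx)$, and its vanishing in $H^{\e 2}(k,Z^{\e n}(\xb))$ forces $\gamma\in\Bnx$, so the element lies in the image of $\Bnx\to H^{\e 1}(k,\chnb)$; the reverse inclusion is immediate. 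This would complete the proof.
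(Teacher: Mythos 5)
Your argument is correct and is essentially the paper's own proof: take $\g$-cohomology of the two short exact sequences (5) and (6) (Lemmas 3.1 and 3.2, including the identification of $\krn\!\be\left[\chn\ra\chnb^{\g}\right]$ via the invariants of the tail of the Gersten--Quillen complex, which is the argument of Colliot-Th\'el\`ene--Raskind cited there), splice them at $H^{\e 1}(k,\img\dn)$, and then factor the last map through $\Bnx$ and check exactness at $\Bnx$ and at $H^{\e 1}(k,\chnb)$ by the diagram chases you describe. The only difference is that you write out explicitly the two exactness verifications the paper dismisses as ``not difficult to check,'' and your chases are correct.
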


\begin{remark} When $n=1$, there are natural isomorphisms
$\chn=\pic X$ and $\chnb=\pic\xb\,$ [3, 21.6.10 and 21.11.1].
Further, $X^{ n-1}=\{\eta\}\e$, $\,\dn=\partial^{\e
n-1}_{\eta}\colon\overline{K}_{\be 1}(\eta)=\kb(X)^{*}\ra\iv\xb$ is
the usual divisor map (whose kernel equals $H^{0}(\xb,\Bbb
G_{m})\overset{\text{def.}}{=}\kb[X]^{*}$) and
$$
\Bnx=B_{\e 1}(X)=\krn\!\be\left[\e H^{\e 2}(k,\kb(X)^{*})\ra H^{\e
2}\be\big(k,\iv\xb\,\big)\right]= \br_{\be 1}X,
$$
where $\br_{\be 1}X=\krn\!\left(\e\br X\ra\br\xb\,\right)$ (see the
next section). Thus the exact sequence of the theorem is indeed a
generalization of (1).
\end{remark}

\section{The group $\Bnx$}

In this Section we show that the group $\Bnx$ appearing in the exact
sequence of Theorem 3.4 contains a copy of $\br_{1}
Y=\krn\be\big(\e\br Y\ra\br\yb\,\big)$ for {\it every}\, smooth
closed integral subscheme $Y\subset X$ of codimension $n-1$.

\smallskip

Recall that $\dn=\sum_{\e x\in X^{\lbe n-1}}\dnx\,$, where, for each
$x\in X^{n-1}$,
$$
\dnx\colon\kblx=\displaystyle\bigoplus_{y\in\xb^{\e n-1}_{\be x}}
\kb(y)^{*} \ra Z^{\e 1}\be\big(\vbx\big)
$$
is the sum of divisors map. For each $x\in X^{n-1}$, set
$$
\bnx=\krn\!\left[\,H^{\e 2}(k,\kblx)\ra H^{\e
2}(k,Z^{\e1}(\vbx\e))\e\right],
$$
where the map involved is induced by $\dnx$, and let
$$
\Sigma\,\,\colon\bigoplus_{x\in X^{n-1}} H^{\e 2}(k,Z^{\e
1}(\e\vbx))\ra H^{\e 2}(k,Z^{\e n}(\xb))
$$
be the natural map $(\xi_{x})\mapsto\sum c_{x}(\xi_{x})$, where
$c_{x}\colon H^{\e 2}(k,Z^{\e 1}(\e\vbx))\ra H^{\e 2}(k,Z^{\e
n}(\xb))$ is induced by the inclusion $Z^{\e 1}(\e\vbx)\subset Z^{\e
n}(\xb)$. Then there exists a canonical exact sequence
$$
0 \ra \displaystyle\bigoplus_{x\in X^{n-1}}\bnx\ra\Bnx\ra
\krn\Sigma\,\,.
$$
We will relate the groups $\bnx$ to more familiar objects.

\medskip

Fix $x\in X^{ n\be -\be 1}$ and set $Y=V(x)$. Then $Y$ is a
geometrically reduced algebraic $k$-scheme [3, 4.6.4\,]. Further,
the map $\kblx\ra Z^{\e 1}\be\big(\e\yb\e\big)$ factors through
$\iv\yb$, the group of Cartier divisors on $\yb$. Consider
\begin{equation}
\bnpx=\krn\ng\left[H^{\e2}(k,\kblx)\ra
H^{\e2}\be\big(k,\iv\yb\e\big)\e\right]\subset\bnx.
\end{equation}
Let ${\s R}_{\,\yb}^{*}$ denote the \'etale sheaf of invertible
rational functions on $\yb$. Note that $\kblx\be=\be H^{\e
0}(\yb,{\s R}_{\,\yb}^{*})$. Now, since $\yb$ is reduced, there
exists an exact sequence of \'etale sheaves
$$0\ra\Bbb G_{m,\yb}\ra{\s R}_{\,\yb}^{*}\ra{\s Div}_{\e\yb}\ra 0,$$
where ${\s Div}_{\yb}$ is the sheaf of Cartier divisors on $\yb$ [3,
20.1.4 and 20.2.13\,]. This exact sequence gives rise to an exact
sequence of \'etale cohomology groups
\begin{equation}
0\ra H_{\text{\'et}}^{\e 1}\be\big(\e\yb,{\s
Div}_{\e\yb}\e\big)\ra\br'\,\yb\ra H_{\text{\'et}}^{\e
2}\be\big(\e\yb,{\s R}_{\,\yb}^{*}\big) \ra H_{\text{\'et}}^{\e
2}\be\big(\e\yb,{\s Div}_{\e\yb}\big)
\end{equation}
where $\br'\,\yb=H_{\text{\'et}}^{\e 2}\be\big(\e\yb, \Bbb
G_{m}\big)$ is the cohomological Brauer group of $\yb$ [2, II,
p.73]. Similarly, there exists an exact sequence
\begin{equation}
0\ra H_{\text{\'et}}^{\e 1}(Y,{\s Div}_{Y})\ra\br'\,Y\ra
H_{\text{\'et}}^{\e 2}(Y,{\s R}_{\,Y}^{*}) \ra H_{\text{\'et}}^{\e
2}(Y,{\s Div}_{Y}).
\end{equation}
We will regard $H_{\text{\'et}}^{\e 1}\be\big(\e\yb,{\s
Div}_{\e\yb}\e\big)$ (resp. $H_{\text{\'et}}^{\e 1}(Y,{\s
Div}_{Y})$) as a subgroup of $\br'\,\yb$ (resp. $\br'\,Y$).

 Now the
exact sequence of terms of low degree
$$
0\ra E_{2}^{\e 1,0}\ra E^{\e 1}\ra E_{2}^{\e 0,1}\ra E_{2}^{\e 2,0}
\ra\krn(E^{\e 2}\ra E_{2}^{\e 0,2})\ra E_{2}^{\e 1,1}\ra E_{2}^{\e
3,0}
$$
belonging to the Hochschild-Serre spectral sequence
$$
E_{2}^{\e p,q}=H^{\e p}\be\big(k,H^{\e
q}_{\text{\'et}}\be\big(\e\yb,{\s R}_{\,\yb}^{*}\big)\e\big)\implies
H_{\text{\'et}}^{\e p+q} (Y,{\s R}_{\, Y}^{*})
$$
yields, using [2, II, Lemma 1.6, p.72], an exact sequence
\begin{equation}
0\ra H^{\e 2}(k,\kblx)\ra H_{\text{\'et}}^{\e 2}(Y,{\s R}_{\,Y}^{*})
\ra H_{\text{\'et}}^{\e 2}\be\big(\e\yb,{\s R}_{\,\yb}^{*}\big).
\end{equation}
Similarly, the spectral sequence
$$
H^{\e p}\be\big(k,H_{\text{\'et}}^{\e q}\be\big(\e\yb,{\s
Div}_{\e\yb}\big)\big) \implies H_{\text{\'et}}^{\e p+q}(Y,{\s
Div}_{Y})
$$
yields a complex
\begin{equation}
\begin{array}{rcl}
0 &\ra& H^{\e 1}(k,\iv\yb\e)\ra H_{\text{\'et}}^{\e 1}(Y,{\s
Div}_{Y}) \overset{\psi}\longrightarrow H_{\text{\'et}}^{\e
1}\be\big(\e\yb,{\s Div}_{\e\yb}\big)^{\g}\\
&\overset{\varphi}\longrightarrow & H^{\e
2}\be\big(k,\iv\yb\e\big)\ra H_{\text{\'et}}^{\e 2}(Y, {\s
Div}_{Y})\ra H_{\text{\'et}}^{\e 2}\be\big(\e\yb,{\s
Div}_{\e\yb}\big)
\end{array}
\end{equation}
which is exact except perhaps at $H_{\text{\'et}}^{\e 2}(Y, {\s
Div}_{Y})$. The map labeled $\psi$ in (13) is induced by the
canonical morphism $\yb\ra Y$ , while the map $\varphi$ is the
differential $d_{2}^{\, 0,1}$ coming from the spectral sequence (see
[6, II.4, pp.39-52]). Now we have a commutative diagram

\begin{equation}
\xymatrix{0\ar[r] & H^{\e 2}(k,\kblx)\ar[d]\ar[r]&
H_{\text{\'et}}^{\e 2} (Y,{\s R}_{\,Y}^{*})\ar[d]\ar[r] &
H_{\text{\'et}}^{\e 2}\be\big(\e\yb,{\s
R}_{\,\yb}^{*}\big)\ar[d]\\
0\ar[r] & H^{\e 2}\be\big(\e k,\iv\yb\e\big)/\e\img\varphi \ar[r]&
H_{\text{\'et}}^{\e 2}(Y, {\s Div}_{\e Y})\ar[r] &
H_{\text{\'et}}^{\e 2}\be\big(\e\yb,{\s Div}_{\e\yb}\big).\\
}
\end{equation}
in which the top row is the exact sequence (12), the bottom row
(which is only a complex) is derived from (13), and the middle and
right-hand vertical maps are the maps in (11) and (10),
respectively. Set
$$
\widehat{\text{Br}_{1}^{\,\prime}}\e Y=\krn\be\Bigg[\e\br'\e
Y/H_{\text{\'et}}^{\e 1}(Y,{\s Div}_{Y})\ra
\br'\,\yb/H_{\text{\'et}}^{\e 1}\be\big(\e\yb,{\s
Div}_{\e\yb}\big)\e\Bigg].
$$
Then the above diagram yields a natural isomorphism
\begin{equation}
\widehat{\text{Br}_{1}^{\,\prime}}\e Y= \krn\!\be\left[\e H^{\e
2}(k,\kblx)\ra H^{\e 2}\be\big(\e k,\iv\yb\,\big)/\e\img
\varphi\,\right].
\end{equation}
(Note: only the exactness of the top row of (14) is needed to obtain
the above isomorphism.) On the other hand, there exists an obvious
exact sequence
$$
0\ra\bnpx\ra\krn\!\be\left[\e H^{\e 2}(k,\kblx)\ra H^{\e
2}\be\big(\e k,\iv\yb\,\big)/\e\img\varphi\,\right]\ra\img\varphi,
$$
where $\bnpx$ is the group (9). Using (15) and the fact that
$\img\varphi$ is naturally isomorphic to $\cok\psi$, where $\psi$ is
the map appearing in (13), we conclude that there exists a natural
exact sequence
\begin{equation}
0\ra\bnpx\ra\widehat{\text{Br}_{1}^{\,\prime}}\e Y
\overset{h}\longrightarrow\cok\psi\,.
\end{equation}
The map labeled $h$ in the above exact sequence can be briefly
described as $``\e\varphi^{-1}\circ h^{\e 2}(\text{div})\e\circ\e
u^{-1}"$, where $u\colon H^{\e 2}(k,\kblx)\ra H^{\e
2}_{\text{\'et}}(Y,{\s R}_{\,Y}^{*})$ is the map intervening in (14)
and $h^{\e 2}(\text{div})\colon H^{\e 2}(k,\kblx)\ra H^{\e
2}\be\big(\e k,\iv\yb\,\big)$ is induced by
$\text{div}\colon\kblx\ra \iv\yb$. Next, set
$$
\br_{1}^{\e\prime}\e
Y=\krn\!\left[\e\br^{\e\prime}Y\ra\br^{\e\prime}\e \yb\,\right].
$$
There exists a natural exact commutative diagram
$$
\xymatrix{0\ar[r] & H^{\e 1}_{\text{\'et}}(Y,{\s
Div}_{Y})\ar[d]^{\psi}\ar[r]& \br_{\be 1}^{\e\prime}\e
Y\ar[d]\ar@{->>}[r] & \br_{\be 1}^{\e\prime}\e Y/\e
H^{\e 1}_{\text{\'et}}(Y,{\s Div}_{Y})\ar[d]\\
0\ar[r] & H^{\e 1}_{\text{\'et}}\be\big(\e\yb,{\s
Div}_{\e\yb}\big)^{\g} \ar[r]&  \left(\e\br^{\prime}\e\yb\,\right
)^{\g}\ar[r] & \left(\e\br^{\prime}\e\yb/\e H^{\e
1}_{\text{\'et}}\be\big(\e\yb,{\s Div}_{\e\yb}\big)\e\right)^{\g}.}
$$
An application of the snake lemma to the above diagram yields a
natural exact sequence
\begin{equation}
0\ra H^{\e 1}(k,\iv\yb\e)\ra \br_{\be 1}'\e Y\ra
\widehat{\text{Br}_{1}^{\,\prime}}\e Y
\overset{\delta}\longrightarrow\cok\psi\,.
\end{equation}
Now using the explicit description of the map $\delta$ [8, Lemma
1.3.2, p.11] together with the description of the map
$\varphi=d_{2}^{\, 0,1}$ from [6, \S II.4\,], it can be shown (with
some work) that the maps $h$ in (16) and $\delta$ in (17) {\it are
the same}. Thus we obtain

\begin{proposition} There exists a canonical isomorphism
$$
\bnpx=\br_{1}^{\e\prime}\e Y/\e H^{\e 1}\be\big(k,\iv\yb\e\big).
$$
\end{proposition}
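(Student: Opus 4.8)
The plan is to read off the isomorphism directly from the two exact sequences (16) and (17), once one knows that the maps $h$ and $\delta$ occurring there coincide. Sequence (16) identifies $\bnpx$ with $\krn\be\big[\e h\colon\widehat{\text{Br}_{1}^{\,\prime}}\e Y\ra\cok\psi\,\big]$. On the other hand, the exactness of (17) shows that the kernel of the map $\br_{1}^{\e\prime}\e Y\ra\widehat{\text{Br}_{1}^{\,\prime}}\e Y$ is $H^{\e 1}\be\big(k,\iv\yb\e\big)$ and that its image is exactly $\krn\delta$, so that $\br_{1}^{\e\prime}\e Y/H^{\e 1}\be\big(k,\iv\yb\e\big)\simeq\krn\delta$. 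Granting the equality $h=\delta$, we therefore obtain
$$
\bnpx=\krn h=\krn\delta=\br_{1}^{\e\prime}\e Y/H^{\e 1}\be\big(k,\iv\yb\e\big),
$$
which is the assertion. Thus the whole content of the proposition lies in the identification $h=\delta$; the remaining ingredients — the exactness of (16) and (17), the isomorphism (15), and the commutative diagram (14) — have all been established above.

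To prove $h=\delta$ I would compare the two maps on an arbitrary class $\xi\in\widehat{\text{Br}_{1}^{\,\prime}}\e Y$ at the cochain level. On one side, $\delta(\xi)$ is computed by the standard snake-lemma recipe applied to the commutative diagram preceding (17): lift a representing cochain of $\xi$ to $\br_{1}^{\e\prime}\e Y$, push it into $\big(\br^{\e\prime}\e\yb\e\big)^{\g}$, where it becomes the image under $\psi$ of a class well defined only modulo $\img\psi$; the explicit form of this boundary is recorded in [8, Lemma 1.3.2, p.11]. On the other side, $h(\xi)=\varphi^{-1}\big(h^{\e 2}(\text{div})(u^{-1}\xi)\big)$, where $u\colon H^{\e 2}(k,\kblx)\ra H_{\text{\'et}}^{\e 2}(Y,{\s R}_{\,Y}^{*})$ is the edge homomorphism of the Hochschild--Serre spectral sequence for ${\s R}_{\,Y}^{*}$, the map $h^{\e 2}(\text{div})$ is induced by the divisor-class morphism ${\s R}_{\,\yb}^{*}\ra{\s Div}_{\e\yb}$, and $\varphi=d_{2}^{\,0,1}$ is the differential of the spectral sequence for ${\s Div}$, whose cochain description is taken from [6, \S II.4]. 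The key point is that the morphism of sheaves ${\s R}_{\,\yb}^{*}\ra{\s Div}_{\e\yb}$ induces a morphism of the two Hochschild--Serre spectral sequences (it is what underlies the commutative squares (14) and the exact rows (10)--(11)); choosing compatible cochain representatives throughout, the snake-lemma boundary for the ${\s R}^{*}$-sequence is carried by $\text{div}$ to the snake-lemma boundary for the ${\s Div}$-sequence, and the latter is precisely what $\varphi^{-1}\circ h^{\e 2}(\text{div})\circ u^{-1}$ computes. Hence $h(\xi)=\delta(\xi)$ for every $\xi$.

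The main obstacle is exactly this last identification — the ``with some work'' step flagged before the statement. The difficulty is that $h$ is assembled from three distinct homological operations (an edge map, the differential $d_{2}^{\,0,1}$, and the connecting map of a short exact sequence of sheaves), whereas $\delta$ is a single snake-lemma boundary; matching them forces one to fix explicit cochain-level representatives on both sides and to check that all the relevant spectral-sequence and long-exact-sequence squares commute on cochains, not merely on cohomology. Once $h=\delta$ is established, the proposition follows immediately from the displayed computation above, and it is this identification that expresses $\bnpx$ as the ``geometrically non-trivial part'' $\br_{1}^{\e\prime}\e Y/H^{\e 1}\be\big(k,\iv\yb\e\big)$ of the Brauer group of $Y$.
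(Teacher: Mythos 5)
Your proposal is correct and follows essentially the same route as the paper: read $\bnpx=\krn h$ off from (16), read $\br_{1}^{\e\prime}\e Y/\e H^{\e 1}\be\big(k,\iv\yb\e\big)\simeq\krn\delta$ off from (17), and reduce everything to the identification $h=\delta$, which both you and the paper justify by comparing the cochain-level descriptions of the snake-lemma boundary ([8, Lemma 1.3.2]) and of $\varphi=d_{2}^{\,0,1}$ ([6, \S II.4]) via the morphism of Hochschild--Serre spectral sequences induced by ${\s R}^{*}\ra{\s Div}$. Your sketch of that verification is at the same level of detail as the paper's own ``it can be shown (with some work)''.
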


\begin{corollary} Let $x\in X^{ n-1}$ be such that
$\yb=\vbx$ is {\rm{locally factorial}} (this holds, for example, if
$Y=V(x)$ is regular). Then there exists a canonical isomorphism
$$
\bnx=\br_{\be 1}^{\e\prime}\e Y.
$$
\end{corollary}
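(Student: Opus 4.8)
The plan is to deduce the statement from the preceding proposition by establishing two things when $\yb$ is locally factorial: first, that the subgroup $\bnpx\subseteq\bnx$ defined in (9) is in fact all of $\bnx$; and second, that the group $H^{\e 1}\be\big(k,\iv\yb\e\big)$ appearing in the preceding proposition vanishes.

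For the first point I would invoke the fact that on a locally factorial scheme every Weil divisor is Cartier, i.e.\ the canonical cycle homomorphism $\iv\yb\ra Z^{\e 1}\be\big(\e\yb\e\big)$ is an isomorphism of $\g$-modules (see [3, 21.6.9]). It is harmless here that $\yb$ need not be irreducible: local factoriality forces every local ring of $\yb$ to be an integral domain, so $\yb$ is the disjoint union of its (finitely many) irreducible components, each of which is integral and locally factorial, and the cited result may be applied componentwise. Since, as recalled at the start of this section, the sum-of-divisors map $\dnx\colon\kblx\ra Z^{\e 1}\be\big(\e\yb\e\big)$ factors as $\kblx\ra\iv\yb\ra Z^{\e 1}\be\big(\e\yb\e\big)$ with the second arrow the cycle map, applying $H^{\e 2}(k,-)$ and using the above isomorphism shows that the two homomorphisms $H^{\e 2}(k,\kblx)\ra H^{\e 2}\be\big(k,\iv\yb\e\big)$ and $H^{\e 2}(k,\kblx)\ra H^{\e 2}\big(k,Z^{\e 1}(\e\yb)\big)$ have the same kernel; that is, $\bnpx=\bnx$.

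For the second point I would observe that $Z^{\e 1}\be\big(\e\yb\e\big)=\bigoplus_{z\e\in\e\yb^{\e 1}}\bz$ is a permutation $\g$-module: the natural morphism $\yb\ra Y$ has finite fibres, so $\g$ acts on $\yb^{\e 1}$ with finite orbits and hence with open stabilizers. Consequently $H^{\e 1}\big(k,Z^{\e 1}(\e\yb)\big)=0$, and then $H^{\e 1}\be\big(k,\iv\yb\e\big)=0$ by the isomorphism of the first point. Feeding both facts into the preceding proposition gives $\bnx=\bnpx=\br_{\be 1}^{\e\prime}\e Y\big/\e H^{\e 1}\be\big(k,\iv\yb\e\big)=\br_{\be 1}^{\e\prime}\e Y$, which is the assertion. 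The parenthetical claim in the statement is then clear, since regularity of $Y$ is inherited by $\yb=Y\times_{\spec k}\spec\kb$ (the extension $\kb/k$ being separable) and every regular local ring is a unique factorization domain.

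The step I expect to demand the most care is the identification $\iv\yb=Z^{\e 1}\be\big(\e\yb\e\big)$: one must bear in mind that $\iv\yb$ here denotes the global sections of the Cartier divisor sheaf ${\s Div}_{\e\yb}$, so this is not a formal identity but genuinely uses local factoriality, and one must check that the cycle isomorphism is $\g$-equivariant and is compatible with the factorization of $\dnx$ through $\iv\yb$. The remaining verifications are routine.
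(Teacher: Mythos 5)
Your proof is correct and follows essentially the same route as the paper's: identify $\iv\yb$ with $Z^{\e 1}\be\big(\e\yb\e\big)$ via [3, 21.6.9] to get $\bnx=\bnpx$, note that $Z^{\e 1}\be\big(\e\yb\e\big)$ is a permutation $\g$-module so $H^{\e 1}\be\big(k,\iv\yb\e\big)=0$, and conclude from the preceding proposition. Your additional remarks on $\g$-equivariance and on why regularity of $Y$ yields local factoriality of $\yb$ are sound elaborations of points the paper leaves implicit.
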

\begin{proof} The hypothesis implies that $\iv\yb=Z^{\e 1}\be\big
(\e\yb\e\big)$ [3, 21.6.9\,], so $\bnx=\bnpx$. On the other hand,
since $Z^{\e 1}\be\big (\e\yb\e\big)$ is a permutation $\g$-module,
$H^{\e 1}\be\big(k,\iv\yb\e\big)=H^{\e 1}\be\big(k,Z^{\e 1}\be\big
(\e\yb\e\big)\big)=0$. The result is now immediate from the
proposition.
\end{proof}

\end{document}